\newcommand{\bbar}{\overline{b} }
\newcommand{\C}{\mathbb{C}}
\newcommand{\calC}{\mathcal{C}}
\newcommand{\Chat}{\widehat{\C}}
\newcommand{\D}{\mathcal{D}}
\newcommand{\del}{\partial}
\newcommand{\E}{\mathcal{E}}
\newcommand{\F}{\mathcal{F}}
\newcommand{\I}{\mathcal{I}}
\newcommand{\Ip}{\I_{+}}
\newcommand{\Omo}{\Omega_{0}}
\newcommand{\Sg}{\mathcal{S}_{g}}
\newcommand{\Szero}{\mathcal{S}_{0}}
\newcommand{\SL}{\textup{SL}}
\newcommand{\wt}{\textup{wt}}
\newcommand{\Z}{\mathbb{Z}}
\newcommand{\omt}{\widetilde{\omega}}
\theoremstyle{plain}
\newtheorem{lemma}{Lemma}
\newtheorem{proposition}{Proposition}
\theoremstyle{definition}
\newtheorem{definition}{Definition}
\theoremstyle{remark}
\newtheorem{remark}{Remark}
\numberwithin{equation}{section} 
\newcommand{\g}{\ensuremath{\Gamma}}
\newcommand{\ps}{{\raise 1pt\hbox{\tiny (}}}
\newcommand{\pss}{{\raise 1pt\hbox{\tiny [}}}
\newcommand{\pdd}{{\raise 1pt\hbox{\tiny ]}}}
\newcommand{\pd}{{\raise 1pt\hbox{\tiny )}}}
\newcommand{\bs}{{\raise 1pt\hbox{\tiny [}}}
\newcommand{\bd}{{\raise 1pt\hbox{\tiny ]}}}
\def\cross{\mathinner{\mathrel{\raise0.8pt\hbox{$\scriptstyle>$}}
                 \joinrel\mathrel\triangleleft}}
\def\K{\mathcal{K}}
\newcommand{\be}{\begin{equation}}
\newcommand{\ee}{\end{equation}}
\newcommand{\nn}{\nonumber \\}
\newcommand{\one}{\mathbf{1}}
\newcommand{\nc}{\newcommand}
\nc{\cali}{\mathcal}
\nc{\on}{\operatorname}
\nc{\Wick}{{\mb :}}
\nc{\ddz}{\frac{\partial}{\partial z}}
\nc{\ch}{\mbox{ch}}
\nc{\Oo}{{\cali O}}
\nc{\cond}{|\,}
\nc{\bib}{\bibitem}
\nc{\pone}{\Pro^1}
\nc{\pa}{\partial}
\nc{\arr}{\rightarrow}
\nc{\larr}{\longrightarrow}
\nc{\ket}{\rangle}
\nc{\bra}{\langle}
\nc{\gam}{\bar{\gamma}}
\nc{\q}{\widetilde{Q}}
\nc{\ep}{\epsilon}
\nc{\su}{\widehat{{\mf s}{\mf l}}_2}
\nc{\sw}{{\mf s}{\mf l}}
\nc{\h}{{\mf h}}
\nc{\n}{{\mf n}}
\nc{\ab}{\mf{a}}
\nc{\is}{{\mb i}}
\nc{\js}{{\mb j}}
\nc{\bi}{\bibitem}
\nc{\He}{{\cali H}}
\nc{\inv}{^{-1}}
\nc{\ol}{\overline}
\nc{\wh}{\widehat}
\nc{\dst}{\displaystyle}
\nc{\delt}{\partial_t}
\nc{\ddt}{\frac{\partial}{\partial t}}
\nc{\delx}{\partial_x}
\nc{\mb}{\mathbf}
\nc{\mf}{\mathfrak}
\nc{\mbb}{\mathbb}
\nc{\Ctt}{\C((t))}
\nc{\Ct}{\C[t,t\inv]}
\nc{\ghat}{\wh{\g}}
\nc{\un}{\underline}
\nc{\mc}{\mathcal}
\nc{\BB}{{\mc B}}
\nc{\bb}{{\mf b}}
\nc{\kk}{{\mf k}}
\nc{\frob}{\times}
\nc{\sm}{\setminus}
\nc{\Pp}{{\mathbb P}^1}
\nc{\Aa}{{\mc A}}
\nc{\AutO}{\on{Aut}\Oo}
\nc{\AUTO}{\un{\on{Aut}}\Oo}
\nc{\AUTK}{\un{\on{Aut}}\K}
\nc{\Heout}{\He_{\out}}
\nc{\Hetil}{{\widetilde\He}}
\nc{\wb}{\overline}
\nc{\Res}{\on{Res}}
\nc{\pitil}{\Pi}
\nc{\Ctil}{\wt{C}}
\nc{\auto}{\on{Aut} \Oo}
\nc{\phitil}{\wt{\phi}}
\nc{\gz}{\g_{\vec z}}
\nc{\tensorM}{\bigotimes_{i=1}^N{\mathbb M}_i}
\nc{\tensorW}{\bigotimes_{i=1}^N W_{\nu_i,k}}
\nc{\out}{\on{out}}
\nc{\m}{{\mathfrak m}}
\nc{\gx}{\g^0_{\vec x}}
\nc{\hx}{\He^0_{\vec x}}
\nc{\tensorpi}{\pi_{\nu_1,\ldots,\nu_N}^\kappa}
\nc{\Phizw}{\Phi_{\vec w}({\vec z})}
\nc{\Pro}{{\mathbb P}}
\nc{\De}{\Delta}
\nc{\us}{\underset}
\nc{\Ll}{\mc L}
\nc{\dR}{\on{dR}}
\nc{\T}{{\mc T}}
\nc{\Xn}{\overset{\circ}X{}^n} \nc{\Dn}{\overset{\circ}D{}^n}
\nc{\Dxn}{\overset{\circ}D{}^n_x} \nc{\varphitil}{\wt{\varphi}}
\nc{\lf}{{\mf l}}
\nc{\GL}{{}^L G}
\nc{\Vir}{\on{Vir}}
\begin{document}
\title[Schottky vertex operator cluster algebras] 
{Schottky vertex operator cluster algebras}   
\author{A. Zuevsky}
\address{Institute of Mathematics \\ Czech Academy of Sciences\\ Zitna 25, 11567 \\ Prague\\ Czech Republic}

\email{zuevsky@yahoo.com}







\begin{abstract}
Using 
recursion formulas for vertex operator algebra higher genus characters  
with formal parameters identified with   
local coordinates around marked points on a Riemann surface of arbitrary genus, 
 we introduce the notion of a vertex operator cluster algebra structure. 
Cluster elements and mutation rules are explicitly defined, and the simplest example of a 
vertex operator cluster algebra is presented. 
\end{abstract}

\keywords{Cluster algebras; vertex operator algebras; Riemann surfaces; vertex algebra characters; Schottky uniformization}

\maketitle
\section{Introduction} 
The theory of cluster algebras is connected to many different areas of mathematics, e.g., 
the representation theory of finite dimensional algebras, Lie theory, Poisson geometry and
Teichm\"uller theory\cite{GSV1, GSV2, GSV3}.  
 Among these topics are 
 dilogarithm identities for conformal field theories\cite{N1, N2},  
quantum algebras
\cite{GLS, HL}, quivers \cite{Ke1, Ke2, Na}. 
Cluster algebras 
have 
numerous applications 
\cite{FG1, FG2, FG3, FG4, GSV1, GSV2, FST, KS,   
N1, N2, DFK}. 
Cluster algebras appear in several applications in Conformal Field Theory 
\cite{DFK, N1, N2}.  
It is natural to find an analogue of a cluster algebra structure in the language of vertex operator algebras. 
In particular, a structure that incorporates non-commutative nature of vertex operator algebra relations. 

In this paper we make use of the higher genus reduction formulas for 
vertex algebra characters. 
The 
recursion properties of vertex operator algebra characters  allow us to 
  introduce 
an algebraic structure which we call a vertex operator cluster algebra,  simultaneously 
incorporating  properties of cluster algebras, non-commutative nature, 
and analytical and geometrical features of character function theory of vertex operator algebras on Riemann surfaces. 
Vertex operator cluster algebra seeds are defined over non-commutative variables 
(elements of vertex algebra), coordinates around 
marked points on a Riemann surface, 
and functions depending on 
%
a number of vertex operators. 

In Section \ref{definition} we recall basic definitions related to cluster algebras. 
In Section \ref{corfu} the construction of $n$-point characters 
on the  Schottky reparameterization of a genus $g$ 
 Riemann surface 
is reminded.
A short introduction to vertex operator algebras is given in Appendix \ref{vertex}. 
Appendix \ref{cluster} contains the formulation of a vertex operator cluster algebra structure and Proposition \ref{pizda} describing 
the involutivity property of a vertex algebra setup. 
Appendix \ref{sec:Genusg} contains a description of the Schottky parameterization of a genus $g$ Riemann surface. 
Appendix \ref{derivation} describes auxiliary objects and matrices needed for the Schottky genus $g$ Zhu reduction 
formula. 
 
\section{Vertex algebra characters and Zhu reduction formula in Schottky parameterization} 
\label{corfu}
In this section we recall \cite{TW} the construction and Proposition \ref{theor:ZhuGenusg} 
 for vertex operator algebra 
character functions on a genus $g$ Riemann surface. 
In particular, 
the  
formal partition and $n$-point correlation functions for a vertex operator algebra associated to a genus $g$ Riemann surface $\Sg$ are introduced in the Schottky  scheme  with sewing relation \eqref{eq:SchottkySewing2}.  
All expressions here are functions of formal variables $w_{\pm a}$, $\rho_{a}$ 
 and vertex operator parameters. 
Then we recall 
 the genus $g$ Zhu recursion formula with universal  coefficients that have a geometrical meaning and 
are meromorphic on a Riemann surface 
$\Sg$ for all $(w_{\pm a},\rho_{a})\in\mathfrak{C}_{g}$ (for notations see Appendix \ref{sec:Genusg}).  
 These coefficients are 
generalizations of the 
elliptic Weierstrass functions~\cite{Z}. 

For a $2g$ vertex algebra $V$ states 
\[
\bm{b}=(b_{-1}, b_{1}; \ldots;  b_{-g} ;b_{g}),  
\]
and corresponding local coordinates 
\[
\bm{w}= (w_{-1}, w_{1}; \ldots; w_{-g},w_{g}),  
\]
of points $2g$ $(p_{-1}, p_1; \ldots;  p_{-g}, p_g)$ on the sphere  $\mathcal{S}_0$, 
consider the genus zero $2g$-point correlation function
\begin{align*}
Z^{(0)}(\bm{b,w})=&Z^{(0)}(b_{-1},w_{-1};b_{1},w_{1};\ldots;b_{-g},w_{-g};b_{g},w_{g})
\\
=&\prod_{a\in\Ip}\rho_{a}^{\wt(b_{a})}Z^{(0)}(\bbar_{1},w_{-1};b_{1},w_{1};\ldots;\bbar_{g},w_{-g};b_{g},w_{g}).
\end{align*}
where $\Ip=\{1,2,\ldots,g\}$. 
Let 
\[
\bm{b}_{+}=(b_{1},\ldots,b_{g}), 
\]
 denote an element of a $V$-tensor product $V^{\otimes g}$-basis with dual basis 
\[
 \bm{b}_{-}=(b_{-1}, \ldots,b_{-g}), 
\]
 with respect to the bilinear form 
$\langle \cdot, \cdot\rangle_{\rho_{a}}$ (cf. Appendix \ref{vertex}). 

 Let $w_{a}$ for $a\in\I$ be $2g$ 
 formal variables. One identify them  
 with the canonical Schottky parameters (see Appendix \ref{sec:Genusg}).   
 We can 
define the genus $g$ partition function as
\begin{align}\label{GenusgPartition}
Z_{V}^{(g)} =Z_{V}^{(g)}(\bm{w,\rho})
=\sum_{\bm{b}_{+}}Z^{(0)}(\bm{b,w}),
\end{align}
for 
\[
(\bm{w,\rho})=(w_{\pm 1},  \rho_{1}; \ldots; w_{\pm g},\rho_{g}). 
\]
This definition is motivated by the sewing relation \eqref{eq:SchottkySewing2}. 
\begin{remark} 
	Note that $Z_{V}^{(g)}$ depends on  $\rho_{a}$ via the dual vectors $\bm{b}_{-}$ as in \eqref{eq:bbar}. 
The genus $g$ partition function for the tensor product $V_{1}\otimes V_{2}$ of two vertex operator algebras $V_{1}$ and $V_{2}$ is  
\[
Z^{(g)}_{V_{1}\otimes V_{2}}=Z_{V_{1}}^{(g)} \; Z_{V_{2}}^{(g)}.
\]
\end{remark}
Now 
we recall 
a formal Zhu reduction expression for all
 genus $g$  Schottky $n$-point character functions.
One 
defines the genus $g$  formal $n$-point function for $n$ vectors 
$(v_{1},\ldots,v_{n}) \in V$ inserted $(y_{1},\ldots,y_{n})$ by
\begin{align}\label{GenusgnPoint}
Z_{V}^{(g)}(\bm{v,y}) =Z_{V}^{(g)}(\bm{v,y};\bm{w,\rho})
=
\sum_{\bm{b}_{+}}Z^{(0)}(\bm{v,y};\bm{b,w}),
\end{align}
where 
\[
Z^{(0)}(\bm{v,y};\bm{b,w})=Z^{(0)}(v_{1},y_{1};\ldots;v_{n},y_{n};b_{-1},w_{-1};\ldots;b_{g},w_{g}).
\] 
Let $U$ be a vertex operator subalgebra of $V$ where $V$ has a $U$-module decomposition
\[
V=\bigoplus_{\alpha\in A}
{W}_{\alpha},
\]
  for  $U$-modules $
{W}_{\alpha}$ and some indexing set $A$. 
Let 
\[
{W}_{\bm{\alpha}}=\bigotimes_{a=1}^{g} 
{W}_{\alpha_{a}}, 
\]
 denote a tensor product of $g$ modules 
\begin{align}\label{eq:Z_Walpha}
Z_{
{W}_{\bm{\alpha}}}^{(g)}(\bm{v,y}) =\sum _{\bm{b_{+}}\in 
{W}_{\bm{\alpha}}} Z^{(0)}(\bm{v,y};\bm{b,w}),
\end{align}
where here the sum is over a basis $\{\bm{b}_{+}\}$ for  $
{W}_{\bm{\alpha}}$. 
It follows that
\begin{align}
\label{eq:Z_WalphaSum}
Z_{V}^{(g)}(\bm{v,y})=\sum_{\bm{\alpha}\in\bm{A}}Z_{
{W}_{\bm{\alpha}}}^{(g)}(\bm{v,y}),
\end{align}
where the sum ranges over $\bm{\alpha}=(\alpha_{1},\ldots ,\alpha_{g}) \in \bm{A}$,  for $\bm{A}=A^{\otimes{g}}$. 
Finally, it is useful to define corresponding formal $n$-point correlation differential forms 
\begin{align}
\label{tupoy}
\F_{V}^{(g)}(\bm{v,y}) &=Z^{(g)}(\bm{v,y}) \; \bm{dy^{\wt(v)}},
\nn
\F_{
{W}_{\bm{\alpha}}}^{(g)}(\bm{v,y}) &=Z_{
{W}_{\bm{\alpha}}}^{(g)}(\bm{v,y})\; \bm{dy^{\wt(v)}},
\end{align}
where  
\[
\bm{dy^{\wt(v)}}=\prod_{k=1}^{n} dy_{k}^{\wt(v_{k})}.
\]
Recall notations and 
identifications given in Appendix \ref{derivation}. Then one has: 
\begin{proposition}
\label{theor:ZhuGenusg} 
The genus $g$ $(n+1)$-point formal differential 
$\F_{
{W}_{\bm{\alpha}}}^{(g)}(u,x;\bm{v,y})$ 
for a quasiprimary vector $u \in U$ of weight $\wt(u)=p$ inserted at a 
 point
$p_0$, with the coordinate $x$, and general vectors $(v_{1}, \ldots, v_{n})$ inserted at points 
 $p_1, \ldots, p_n$ with coordinates 
$(y_{1}, \ldots, y_{n})$ correspondingly, 
  respectively, satisfies the recursive identity
\begin{align}
\label{eq:ZhuGenusg}
&\F_{{
{W}_{\bm{\alpha}}}}^{(g)}(u, x; 
\bm{v,y}
)
\nn
& =  \sum_{k=1}^{n}\sum_{j\ge 0}\del^{(0,j)} \; \Psi_{p}(x,y_{k})\;
\F_{{
{W}_{\bm{\alpha}}}}^{(g)}(v_1, y_1; \ldots; u(j)v_{k}, y_{k}; \ldots; v_n, y_n)\; dy_{k}^{j}\; 
\nn
& \qquad + \sum_{a=1}^{g} \Theta_{a}(x) \; O^{
{W}_{\bm{\alpha}}}_{a}(u; 
\bm{v,y}
). 
\end{align}
Here $\del^{(0,j)}$ is given by 
\[
\del^{(i,j)}f(x,y)=\del_{x}^{(i)}\del_{y}^{(j)}f(x,y), 
\]
for a function $f(x,y)$, and $\del^{(0,j)}$ denotes partial derivatives with respect to $x$ and $y_j$. 
The forms 
$\Psi_{p}(x, y_{k} )\; dy_{k}^{j}$ given by 
 \eqref{psih}, 
 $\Theta_{a}(x)$ is of \eqref{thetanew}, and 
 $O^{
{W}_{\bm{\alpha}}}_{a}(u; 
\bm{v,y}
)$ of \eqref{oat}. 
\end{proposition}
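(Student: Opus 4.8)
The statement is a genus-$g$ Zhu recursion formula in Schottky parameterization. I need to prove a recursive identity expressing an $(n+1)$-point correlation differential with a quasiprimary vector $u$ inserted at $p_0$ in terms of $n$-point differentials, with two types of terms: a "propagator" sum involving the weight-$p$ form $\Psi_p(x,y_k)$ and derivatives $u(j)v_k$, and a "zero-mode" sum over the $g$ handles involving $\Theta_a(x)$ and operators $O_a^{W_{\bm\alpha}}$. Let me think about the structure.

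=== PROOF PROPOSAL ===

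The plan is to reduce the genus $g$ statement to the genus zero correlation functions that appear in the definition \eqref{eq:Z_Walpha}, applying the known genus zero Zhu reduction (the Ward-type identity for insertion of a quasiprimary vector) term by term in the sum over the basis $\{\bm{b}_+\}$, and then to re-sum the Schottky sewing so that the genus zero propagator matures into the geometric forms $\Psi_p$ and $\Theta_a$. First I would write $\F_{W_{\bm\alpha}}^{(g)}(u,x;\bm{v,y})$ via \eqref{eq:Z_Walpha} as a sum over $\bm{b}_+\in W_{\bm\alpha}$ of the genus zero $(n+1+2g)$-point function $Z^{(0)}(u,x;\bm{v,y};\bm{b,w})$, decorated with the differential factor $\bm{dy^{\wt(v)}}$ and $dx^{\wt(u)}=dx^p$. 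On each genus zero correlator I would invoke the genus zero Zhu recursion: commuting the modes of $u$ past the other insertions produces a sum over all the remaining $n+2g$ insertion points, each contributing a genus zero Weierstrass/rational kernel times a correlator with $u(j)$ acting on the corresponding vector.

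The key bookkeeping step is to split this genus zero output into two groups according to whether the contraction is with one of the external vertices $v_k$ (inserted at $p_k$, coordinates $y_k$) or with one of the $2g$ sewing vectors $b_{\pm a}$ (inserted at $w_{\pm a}$). The first group, after summing over $\bm{b}_+$ and reassembling \eqref{eq:Z_Walpha}, reproduces exactly the first sum in \eqref{eq:ZhuGenusg}: the genus zero kernels combine under the Schottky sewing relation \eqref{eq:SchottkySewing2} into the meromorphic form $\Psi_p(x,y_k)$, with the derivative structure $\del^{(0,j)}$ matching the Taylor expansion of the kernel and the factor $dy_k^j$ accounting for the weight change when $u(j)$ is applied. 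The second group, the contractions with the sewing vectors $b_{\pm a}$, is precisely where the zero-mode content lives; upon summing over the basis and using the duality of $\bm{b}_-$ with $\bm{b}_+$ under $\langle\cdot,\cdot\rangle_{\rho_a}$, these terms organize into the handle operators $O_a^{W_{\bm\alpha}}(u;\bm{v,y})$ defined in \eqref{oat}, each weighted by the geometric coefficient $\Theta_a(x)$ of \eqref{thetanew}. Here I would use the identifications recalled in Appendix \ref{derivation} to recognize that the sum over $a$ of the genus zero kernels paired at $w_{\pm a}$, after sewing, is exactly $\Theta_a(x)$.

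The main obstacle I anticipate is the convergence and re-summation in the second group: exchanging the finite genus zero Zhu sum with the infinite sum over the intermediate states $\bm{b}_+$ (and hence over the weights $\wt(b_a)$ weighted by $\rho_a^{\wt(b_a)}$) must be justified, and one must verify that the contractions with $b_{\pm a}$ regroup cleanly using the bilinear form rather than producing cross terms that spoil the handle-by-handle structure. Concretely, I would need to check that acting with $u(j)$ on a sewing vector $b_a$ and then contracting against its dual $b_{-a}$ reproduces, after summation over the basis, a shift of the zero mode that is captured by $O_a^{W_{\bm\alpha}}$, and that the attendant geometric series in $\rho_a$ converges on $\mathfrak{C}_g$ so that $\Theta_a(x)$ is the claimed meromorphic object. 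Once these two re-summations are carried out and matched against the definitions \eqref{psih}, \eqref{thetanew}, \eqref{oat}, the identity \eqref{eq:ZhuGenusg} follows by linearity of the defining sum in \eqref{eq:Z_Walpha}.
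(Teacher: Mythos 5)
The first thing to say is that the paper itself contains no proof of Proposition \ref{theor:ZhuGenusg}: it is recalled verbatim from \cite{TW}, and what the paper supplies is only the list of ingredients of that proof in Appendix \ref{derivation} (the vectors $X$, $p(x)$, $q(y)$, $G$, the matrices $R$, $\Delta$, $\widetilde{R}=R\Delta$, the formal inverse \eqref{eq:ImRinverse}, and the dressed objects \eqref{eq:psilittleN}, \eqref{eq:chiadef}, \eqref{eq:thetadef}). So your attempt has to be measured against the proof in \cite{TW} that those definitions encode. Your opening moves do agree with it: expand $\F^{(g)}_{{W}_{\bm{\alpha}}}(u,x;\bm{v,y})$ through \eqref{eq:Z_Walpha} into genus zero $(n{+}1{+}2g)$-point functions, apply genus zero Zhu reduction to each, and separate contractions with the external vectors $v_k$ from contractions with the sewing insertions $b_{\pm a}$.

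The genuine gap is in what you claim each group of contractions produces. You assert that the external contractions alone ``reproduce exactly the first sum in \eqref{eq:ZhuGenusg}'', i.e.\ already carry the dressed kernel $\Psi_p(x,y_k)$, and that the sewing contractions separately ``organize into'' $\sum_a\Theta_a(x)\,O_a$. That split is structurally wrong, and no care with interchanging sums will repair it. Genus zero reduction attaches to the external vertices only the undressed rational kernel $\psi_p^{(0)}(x,y_k)$ of \eqref{PsiDef}; the correction $\widetilde{P}(x)(I-\widetilde{R})^{-1}Q(y)$ in \eqref{GenusgPsiDef} comes entirely from the sewing channel. Moreover, the sewing contractions do not directly yield zero-mode terms: they produce the new unknown correlators $X_a(m)$ of \eqref{XamDef}, in which $u(m)$ acts on the intermediate states $b_a$ --- objects of the same kind as the quantity being computed. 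The heart of the proof, which your sketch omits, is closing this loop: one uses the adjoint relations of Lemma \ref{AdjointLemma} and \eqref{RhoAdjoint} to derive a linear system for $X$ of the schematic form $X=G+\widetilde{R}X+(\hbox{zero-mode data }o_a(\ell))$, solves it with the formal inverse $(I-\widetilde{R})^{-1}=\sum_{k\ge0}\widetilde{R}^{\,k}$ of \eqref{eq:ImRinverse}, and only after substituting the solution back do the two sums of \eqref{eq:ZhuGenusg} emerge, with the sewing channel feeding \emph{both} of them (the dressing of $\Psi_p$ and the coefficients $\chi_a$, hence $\Theta_a$, of \eqref{eq:chiadef} and \eqref{eq:thetadef}). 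Relatedly, your hope that the contractions regroup ``handle-by-handle'' without cross terms is false: $R_{ab}(m,n)$ in \eqref{eq:Rdef} is nonzero for $a\neq-b$, so the inverse $(I-\widetilde{R})^{-1}$ mixes all $2g$ sewing insertions; this mixing is an essential feature of the universal coefficients, not an obstruction to be ruled out. Finally, the convergence issue you flag as the main obstacle is not one in this setting: the identity is an identity of formal series, with $(I-\widetilde{R})^{-1}$ defined formally by \eqref{eq:ImRinverse}, while analytic convergence on $\Cg$ is a separate matter not needed for the recursion itself.
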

\section{Schottky vertex operator cluster algebras} 
\label{cluster}
In this section we formulate Proposition \ref{pizda}  
 concerning a cluster vertex algebra associated to a vertex operator algebra in the case of 
 Schottky parameterization of a genus $g$ 
Riemann surface.  
That Proposition clarifies 
%
a cluster-like algebra structure for a vertex operator algebra.  
 Let us fix a strong-type (cf. Appendix \ref{vertex})  vertex operator algebra $V$.   
Chose $n+1$-marked points $p_0$ and $p_i$, $i=1, \ldots, n$ on a genus $g$  
Riemann surface formed by the Schottky parameterization 
(cf. Appendix \ref{sec:Genusg}). 
In the vicinity of each marked point $p_0$, $p_i$ define local coordinates $x$, $y_i$,  
with zero at points $p_0$, $p_i$ correspondingly.  

Consider $n$-tuples 
 of arbitrary states $v_i \in V$, and 
corresponding vertex operators 
\[
{\bf Y}( \bm{v, y} ) = \left( Y(v_1, y_1), \ldots, Y(v_n, y_n) \right), 
\] 
with coordinates $(y_1, \ldots, y_n)$, 
 around points $p_i$, $i=1, \ldots, n$.
\begin{definition}
\label{seeda}
We define a vertex operator cluster algebra seed    
\begin{equation}
\label{vertex_cluster}
\left( {\bm v}, {\mathbf Y}(\bm{v, y} 
), \F^{(g)}_n(\bm{v, y})  
\right), 
\end{equation}
where
\[
\F^{(g)}_n(\bm{v, y}) 
 = 
\F^{(g)}_{W_{\bm{\alpha}}} (\bm{v, y}),  
\] 
(and in particular $\F^{(g)}_{V} (\bm{v, y})$) is a genus $g$ 
  $n$-point character function $\F^{(g)}_n(\bm{v, y})$ 
 \eqref{tupoy}.
\end{definition}
The mutation is defined as follows: 
\begin{definition}
\label{muta}
For ${\bm v}$, we define the mutation ${\bm v}^\prime$ of 
 ${\bm v}$ in the direction $k \in 1, \ldots, n$ 
as 
\begin{equation}
\label{v-mutation}
{\bm v}^\prime= \mu_k(u, m) {\bm v}= 
\left(v_1, \ldots, F_k(u(m)). 
  {v_k}, \ldots,  v_n\right), 
\end{equation}
for some $m \ge 0$, and $V$-valued functions $F_k(v(m))$. 
Note that due to the property (\ref{lowertrun})  
we get a finite number of terms as a result of the action of $v(m)$ on $v_k$, $1 \le k \le n$. 
For the $n$-tuple of vertex operators we define
\begin{eqnarray}
\label{z-mutation}
{\mathbf Y} \left({\bm v}^\prime, {\bm y} \right) 
&=& \mu_k(u, m)  \;{\mathbf Y} \left( \bm{v, y}  
\right) 
\nn
&=& \left( Y(v_1, y_1), \ldots, 
Y( G_k(u(m)). v_k, y_k), \ldots, Y(v_n, y_n) \right),    
\end{eqnarray}
where $G_k(u(m))$ are other $V$-valued functions. 
For $u \in V$, $w\in \C$,   
 the mutation $\mu(u, x, \bm{y})$ of $\F^{(g)}_n (\bm{v, y})$,  
\begin{eqnarray}
 {\F'}_n^{(g)}(\bm{v, y}) 
 = \mu
(u, x, \bm{y})\; \F^{(g)}_n (\bm{v, y}),  
\label{matrix_rule}
\end{eqnarray}
is defined by summation  
over mutations in all possible directions $k$,  $1 \le k \le n$, with   
auxiliary functions $f(
m, k, x, \bm{y})$, 
$k \in 1, \ldots, n$: 
\begin{eqnarray}
\label{matrix_rule_n_plis_one}
 &&
 {\F'}^{(g)}_n( \bm{v, y}) 
\nn
&& \qquad 
= 
\sum\limits_{k=1}^{n} \sum\limits_{m \ge 0}  
 f(
  m, k, x, \bm{y}) \; 
\F^{(g)}_n(v_1, y_1; \ldots ; 
  H_k(u(m)). v_k, y_k;  \ldots; v_n, y_n), 
\nn
&&
\qquad \qquad \qquad \qquad +
 {\widetilde \F}^{(g)}_n(u, x; \bm{v, y}). 
\end{eqnarray}
 where ${\widetilde \F}^{(g)}_n(u, x; \bm{v, y})$,  
 denotes the higher terms in the genus $g$ 
 Zhu reduction formulas \eqref{eq:ZhuGenusg}, 
and $H_k(u(m))$ are $V$-valued functions.     
Then  \eqref{v-mutation}, \eqref{z-mutation}, \eqref{matrix_rule_n_plis_one} 
define  the mutation of the seed (\ref{vertex_cluster}). 
%
\end{definition}
\begin{definition}
Definitions \ref{seeda}--\ref{muta},  the genus $g$ Zhu reduction procedure,  and involutivity 
condition for mutation  
determine the structure of a genus $g$ vertex operator cluster algebra ${\mathcal C}{\mathcal G}^{(g)}_n$ of dimension $n$.  
%
We call the full vertex operator cluster algebra the union $\bigcup_{n \ge 0} \; {\mathcal C}{\mathcal G}^{(g)}_n$. 
\end{definition} 
\begin{remark}
Exchange matrix of ordinary cluster algebras is replaced in this construction with genus $g$ Schottky 
characters for a vertex operator algebra. These are higher genus generalizations of matrix 
elements at genus zero \cite{FHL}, and traces of vertex operator algebra modules at genus one
\cite{Z}. 
\end{remark}
Using \eqref{eq:ZhuGenusg}, we obtain in 
\eqref{v-mutation}, \eqref{z-mutation}, and \eqref{matrix_rule_n_plis_one}:  
\begin{eqnarray}
f( 
 m, k, x, \bm{y}) &=& \del^{(0,j)} \Psi_{p}(x, y_{k})\; dy_{k}^{j},  
\nn
{\widetilde \F}^{(g)}_n(u, x; \bm{v, y}
) &=&
 \sum_{a=1}^{g} \Theta_{a}(x)\; O^{
{W}_{\bm{\alpha}}}_{a}(u; \bm{v, y} 
%
%
). 
\end{eqnarray}
Next we provide an 
example of the Schottky vertex operator cluster algebra.    
We formulate
\begin{proposition}
\label{pizda}
For a vertex operator algebra $V$ such that $\dim V_k=1$, $k\in \Z$, 
with $u=\one_V$,  $w\in \C$, and 
\[
F_k(u(m)).v=G_k(u(m)).v= \xi_{u, v} u(-1).v, 
\]
\[
  H_k(u(m))= u(m), 
\]
for $ m \ge 0$, and $\xi_{u, v}\in \C$, $\xi_{u, v}^2=1$, depending on $u$ and $v$, 
in \eqref{v-mutation}, \eqref{z-mutation}, and \eqref{matrix_rule_n_plis_one},  
the mutation 
\[
\mu 
= 
\left( \mu_k(\one_V, -1), \mu_k(\one_V, -1),  \mu(\one_V, x, \bm{y}) \right),  
\]
\begin{equation}
\label{mutation}
\left( {\bm v}^\prime, {\mathbf Y} ({\bm v}^\prime, {\bm y}), 
{\F'}^{(g)}_n({\bm v}^\prime, {\bm y}) 
\right)   
=\mu
\;\left( {\bm v}, {\mathbf Y}(\bm{v, y}) 
, \F^{(g)}_n(\bm{v, y}) 
\right), 
 \end{equation}
defined by (\ref{v-mutation}), (\ref{z-mutation}), (\ref{matrix_rule_n_plis_one}) 
is an involution,   
i.e., 
\[
\mu
\; \mu 
= {\rm Id}. \;  
\]
\end{proposition}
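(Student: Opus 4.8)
The plan is to verify the involution claim $\mu\circ\mu=\mathrm{Id}$ componentwise on the three slots of the seed \eqref{vertex_cluster}, exploiting the hypothesis $\dim V_k=1$ together with the specific choices $F_k(u(m)).v=G_k(u(m)).v=\xi_{u,v}\,u(-1).v$ and $H_k(u(m))=u(m)$ with $u=\one_V$ and $\xi_{u,v}^2=1$. The key simplifying observation is that $\one_V(-1).v=v$ for any $v$, since $\one_V$ acts as the identity creation operator; this is precisely property \eqref{lowertrun}-type vacuum behavior recorded in Appendix \ref{vertex}. Thus on the first slot, $F_k(\one_V(-1)).v_k=\xi_{\one_V,v_k}\,v_k$, and applying the mutation twice multiplies $v_k$ by $\xi_{\one_V,v_k}^2=1$, returning $v_k$ unchanged. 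The same computation applies verbatim to the second slot via $G_k$, so $\mathbf{Y}(\bm v,\bm y)$ is restored after two applications.

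First I would treat the first two slots, which are genuinely elementary once the vacuum identity is invoked: each is a scalar mutation by $\xi_{\one_V,v_k}=\pm1$, and squaring gives the identity on every component $k=1,\ldots,n$. Next I would turn to the third slot, the differential $\F^{(g)}_n(\bm v,\bm y)$, whose mutation is governed by \eqref{matrix_rule_n_plis_one}. Here $H_k(\one_V(m))=\one_V(m)$, and one must use that $\one_V(m)$ annihilates all states for $m\ge0$ except the boundary contribution, so that the sum over $k$ and $m\ge0$ collapses; the surviving piece of the recursion \eqref{eq:ZhuGenusg} is exactly the higher-order term $\widetilde\F^{(g)}_n(u,x;\bm v,\bm y)=\sum_{a=1}^g\Theta_a(x)\,O^{W_{\bm\alpha}}_a(u;\bm v,\bm y)$. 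The claim to establish is that applying the full mutation $\mu$ twice to this slot reproduces the original $\F^{(g)}_n(\bm v,\bm y)$, which forces the contributions from the $\Psi_p$-term and the $\Theta_a$-term to recombine under the Zhu reduction into the starting differential.

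I would then assemble the three componentwise statements into the single equality \eqref{mutation} read twice, concluding $\mu\circ\mu=\mathrm{Id}$. The main obstacle I anticipate is the third slot: unlike the first two, it is not a bare scalar multiplication but a sum over directions $k$ and modes $m$ weighted by the geometric coefficients $f(m,k,x,\bm y)=\del^{(0,j)}\Psi_p(x,y_k)\,dy_k^{j}$, so one must control how these meromorphic coefficients and the residual term $\widetilde\F^{(g)}_n$ interact when $\mu$ is iterated. The cleanest route is to argue that, with $u=\one_V$, the recursion \eqref{eq:ZhuGenusg} degenerates because insertion of the vacuum is trivial, so $\F^{(g)}_n$ is genuinely left invariant up to the sign $\xi_{\one_V,v_k}^2=1$ carried from the coupled action on $v_k$; pinning down this degeneration of the Zhu formula at $u=\one_V$ is the step requiring the most care, and everything else follows by direct substitution.
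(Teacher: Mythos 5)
Your proposal is correct and follows essentially the same route as the paper: the first two slots are handled by the trivial action of $\one_V(-1)$ together with $\xi_{u,v}^2=1$, and the third slot by observing that the mutation formula \eqref{matrix_rule_n_plis_one} with $u=\one_V$ is precisely the Zhu recursion \eqref{eq:ZhuGenusg}, which recombines into the $(n+1)$-point function $\F^{(g)}_{n+1}(\one_V,x;\bm{v,y})=\F^{(g)}_n(\bm{v,y})$, so one application of $\mu$ already fixes $\F^{(g)}_n$ --- exactly the computation the paper performs. The only slip is a citation: the identity $\one_V(-1).v=v$ follows from $Y(\one_V,z)=\mathrm{Id}_V$, i.e.\ \eqref{edinica} (the paper instead invokes the grading property \eqref{modeaction1} together with $\dim V_k=1$), not from the lower-truncation property \eqref{lowertrun}.
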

\begin{proof}
According to \eqref{modeaction1} 
for $u= \one_V \in V_0$,  and $v_k \in V_l$, $1 \le k \le n$, $l \in \Z$, 
\[
u(-1) u(-1).v_k: V_l \rightarrow V_{l}. 
\]
Due to the genus $g$ 
 Zhu reduction formula  and \eqref{eq:ZhuGenusg}, we have  
\begin{eqnarray}
 && {\F'}_n^{(g)}(\bm{v, y}) 
=  
\mu
({\mathbf 1}_V, x, \bm{y}) \; \F^{(g)}_{n} ( \bm{v, y})
\nonumber
\\
\nonumber
&& \; =
 \sum\limits_{k=1}^{n} \sum\limits_{m \ge 0}  
 f(
m, k, x, \bm{y}) \; 
\F^{(g)}_n(v_1, z_1; \ldots ; {\mathbf 1}_V[m]. v_k, z_k;  \ldots; v_n, z_n; \tau_1, \tau_2, \epsilon)
\nn
&&
\qquad \qquad \qquad + {\widetilde \F}^{(g)}_n({\mathbf 1}_V, x; \bm{v, y}) 
\nn
&& 
\; 
= \F^{(g)}_{n+1}({\bf 1}_V, x; \bm{v, y})  
=  \F^{(g)}_n(\bm{v, y}). 
\label{matrix_rule_n_plis_one_1}
\end{eqnarray}
Thus, in this case, the mutation $\mu
$ is 
an involution. 
\end{proof}
%

\section*{Acknowledgement} 
The author would like to thank 
H. V. L\^e, and A. Lytchak  
 for related discussions. 
Research of the author was supported by the GACR project 18-00496S and RVO: 67985840. 

\section{Appendix: Vertex Operator Algebras} 
\label{vertex}
A vertex operator algebra \cite{B, DL, FHL, FLM, K, LL, MN} is determined by a quadruple  
$(V,Y,\mathbf{1}_V,\omega)$,  
where 
is a 
linear space 
endowed with a $\mathbb Z$-grading with 
\[
V=\bigoplus_{r \in {\mathbb Z}} V_{r},   
\]
with $\dim V_{r}<\infty$.
  The state  ${\mathbf 1}_V \in V_{0}$, $\mathbf{1}_V \ne  0$,  is the vacuum vector and 
$\omega\in V_{2}$ is the conformal vector with properties described below.
The vertex operator $Y$ is a linear map 
\[
Y: V\rightarrow \mathrm{End}(V)\left[\left[z,z^{-1}\right]\right], 
\]
for formal variable $z$ so that for any vector $u\in V$ we have a vertex operator  
\begin{equation}
Y(u,z)=\sum_{n\in {\mathbb Z}}u(n)z^{-n-1}.  
\label{Ydefn}
\end{equation}
The linear operators (modes) $u(n):V\rightarrow V$ satisfy creativity 
\begin{equation}
Y(u,z){\mathbf 1}_V= u +O(z),
\label{create}
\end{equation}
and lower truncation 
\begin{equation}
u(n)v=0,
\label{lowertrun}
\end{equation}
conditions for each $u$, $v\in V$ and $ n\gg 0$. 
For the conformal vector $\omega$ one has 
\begin{equation}
Y(\omega ,z)=\sum_{n\in {\mathbb Z}}L(n)z^{-n-2},  \label{Yomega}
\end{equation}
where $L(n)$ satisfies the Virasoro algebra for some central charge $C$ 
\begin{equation}
[\,  L(m),L(n)\, ]=(m-n)L(m+n)+\frac{C}{12}(m^{3}-m)\delta_{m,-n}{\rm Id}_V, 
\label{Virasoro}
\end{equation}
where ${\rm Id}_V$ is identity operator on $V$. 
Each vertex operator satisfies the translation property 
\begin{equation}
\partial_z Y(u,z)= Y\left(L(-1)u,z\right).  
\label{YL(-1)}
\end{equation}
The Virasoro operator $L(0)$ provides the ${\mathbb Z}$-grading with 
\[
L(0)u=ru, 
\]
 for 
$u\in V_{r}$, $r\in {\mathbb Z}$. 
Finally, the vertex operators satisfy the Jacobi identity
\begin{eqnarray}
\nonumber
 &&z_0^{-1}\delta\left( \frac{z_1 - z_2}{z_0}\right) Y (u, z_1 )Y(v , z_2)    
  - 
z_0^{-1} \delta \left( \frac{z_2 - z_1}{-z_0}\right) Y(v, z_2) Y(u , z_1 ) 
\\&&
\label{ vertex operator algebraJac}
 \qquad \qquad \qquad
= z_2^{-1}    
\delta\left( \frac{z_1 - z_0}{z_2}\right)
Y \left( Y(u, z_0)v, z_2\right).  
\end{eqnarray} 
These axioms imply 
locality, skew-symmetry, associativity and commutativity conditions:
\begin{eqnarray}
&(z_{1}-z_{2})^N
Y(u,z_{1})Y(v,z_{2}) 
= 
(z_{1}-z_{2})^N
Y(v,z_{2})Y(u,z_{1}),&
\label{Local}
\\
\nonumber
\\
&Y(u,z)v = 
e^{zL(-1)}Y(v,-z)u,&
\label{skew}
\nonumber
\\
\nonumber
\\
\nonumber
&(z_{0}+z_{2})^N Y(u,z_{0}+z_{2})Y(v,z_{2})w = (z_{0}+z_{2})^N Y(Y(u,z_{0})v,z_{2})w,&
\label{Assoc}
\\
\nonumber
\\
&u(k)Y(v,z)-
 Y(v,z)u(k)
= \sum\limits_{j\ge 0}  \left( k \atop j \right)
Y(u(j)v,z)z^{k-j},&
\label{Comm}
\end{eqnarray}
for $u$, $v$, $w\in V$ and integers $N\gg 0$. 
For $v={\mathbf 1}_V$  
one has
\begin{equation}
\label{edinica}
Y({\mathbf 1}_V, z)={\rm Id}_V. 
\end{equation}
Note also that modes of homogeneous states are graded operators on $V$, i.e., for $v \in V_k$,  
\begin{eqnarray}
\label{modeaction1}
 v(n): V_m \rightarrow V_{m+k-n-1}.
\end{eqnarray}
 In particular, let us define the zero mode $o(v)$ of a state of weight $wt(v)=k$, i.e., $v \in V_k$,
as 
\begin{equation}
\label{zero_mode}
o(v) = v(wt (v) - 1), 
\end{equation}
 extending to $V$ additively. 
\begin{definition}
Given a vertex operator algebra $V$, one defines the 
{adjoint vertex operator} with respect to $\alpha \in \C$,  by
\begin{eqnarray}
Y^\dagger (u,z) &=&\sum_{n\in\Z}u^\dagger(n) z^{-n-1} 
\nn
&=& 
Y\left(\exp\left( \frac{z}{\alpha} 
L(1) \right)\left(- \frac{\alpha}
{z^2}\right)^{L(0)}u, \frac{\alpha}{z}\right),
\label{eq:Ydag}
\end{eqnarray} 
associated with the formal M\"obius map \cite{FHL}
\[
z \mapsto \frac{\alpha}{z}.  
\]
\end{definition}
\begin{definition}
An element $u\in V$ is called quasiprimary if 
\[
L(1)u=0. 
\]
\end{definition}
For quasiprimary $u$  
of weight $\wt(u)$ one has
\begin{align*}
u^\dagger(n) = (-1)^{\wt(u)} \alpha^{n+1-\wt(u)} u(2\wt(u)-n-2).
\end{align*}
\begin{definition}
A bilinear form 
\[
\langle . , . \rangle : V \times V\rightarrow \C, 
\]
 is called  {invariant} if \cite{FHL, Li}
\begin{align}
\langle Y(u,z)a,b \rangle = \langle a,Y^\dagger(u,z)b \rangle, 
\label{eq:bilform}
\end{align}
for all 
$a$, $b$, $u\in V$.
\end{definition}
Notice that the adjoint vertex operator $Y^\dagger(.,.)$ as well as the bilinear form $\langle. ,. \rangle$,  
 depend on $\alpha$.
 In terms of modes, we have 
\begin{align}
\langle u(n)a,b \rangle = \langle a,u^\dagger(n)b \rangle.
\label{eq:undag}
\end{align} 
Choosing $u=\omega$, and for $n=1$ implies 
\[
\langle L(0)a,b \rangle = \langle a,L(0)b \rangle.
\]
Thus, 
\[
\langle a, b \rangle=0, 
\]
 when $\wt(a) \neq \wt(b)$.  
\begin{definition}
A vertex operator algebra is called of strong-type 
if 
\[
V_{0} = \C\mathbf{1}_V, 
\]
and $V$ is simple and self-dual, i.e., $V$ is isomorphic to the dual module $V^\prime$ as a $V$-module.  
\end{definition}
It is proven in \cite{Li} 
that a strong-type vertex operator algebra $V$ has a unique invariant non-degenerate  
bilinear form up to normalization. 
This motivates 
\begin{definition}
\label{def:LiZ}
The form 
$\langle  ., .\rangle$
 on a strong-type vertex operator algebra $V$ is the unique invariant bilinear form $\langle . , . \rangle$  
normalized by 
\[
\langle \mathbf{1}_V, \mathbf{1}_V \rangle = 1.
\]
 \end{definition}
Given a vertex operator algebra $(V,Y(.,.),\mathbf{1},\omega)$, one can find an isomorphic vertex operator algebra 
 $(V,Y[.,.],\mathbf{1},\omt)$ 
called \cite{Z} the 
{square-bracket} vertex operator algebra. 
 Both algebras have the same underlying vector space $V$, vacuum vector $\mathbf{1}_V$, and central charge. 
The vertex operator $Y[., .]$ is determined by 
\begin{align*}
Y[v,z] = \sum_{n\in \Z} v[n] z^{-n-1} = Y\left(q_z^{L(0)}v, q_z-1\right).
\end{align*} 
The new square-bracket conformal vector is 
\[
\omt = \omega - \frac{c}{24}\mathbf{1},
\]
with the vertex operator
\[
Y[\omt,z] = \sum_{n\in \Z} L[n] z^{-n-2}. 
\]
The square-bracket Virasoro  operator mode 
$L[0]$ provides an alternative $\Z$--grading on $V$, i.e.,   
$\wt[v]=k$ if 
\[
L[0]v = kv, 
\]
 where $\wt[v]=\wt(v)$ for 
 primary $v$, and $L(n)v=0$ for all $ n>0$. 
 We can similarly define a square-bracket
bilinear form 
$\langle . , . \rangle_{\mathrm{sq}}$.

Next we recall a lemma from \cite{TW}. 
The bilinear form 
$\langle \cdot, \cdot\rangle$ of 
is invertible and that
\[
\langle u, v \rangle=0,
\]
Let $\{ b\}$ be a homogeneous basis for  $V$ with 
the dual basis $\{\bbar \}$. 
\begin{lemma}
\label{AdjointLemma}
	For $u$ quasiprimary of weight $p$ we have
	\begin{align}
	\label{eq:adjointrel}
	\sum_{b\in V_n}
 \left( u(m)b \right)\otimes \bbar =\sum_{b\in V_{n+p-m-1}} b\otimes \left(u^\dagger(m)\bbar \right).
	\end{align}
\end{lemma}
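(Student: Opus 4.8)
The plan is to read both sides of \eqref{eq:adjointrel} as images of the graded ``copairing'' elements $\sum_{b\in V_n}b\otimes\bbar$ and $\sum_{b\in V_{n+p-m-1}}b\otimes\bbar$ under the operators $u(m)$ and $u^\dagger(m)$, and then to prove equality by pairing against test vectors and invoking non-degeneracy of the form on each graded piece. First I would settle the grading bookkeeping. The weight-orthogonality $\langle a,b\rangle=0$ for $\wt(a)\ne\wt(b)$ forces the dual basis $\{\bbar\}$ of a homogeneous basis $\{b\}$ to be compatible with the $\Z$-grading, so $b\in V_n$ gives $\bbar\in V_n$ and the restriction of $\langle\cdot,\cdot\rangle$ to each (finite-dimensional) $V_n$ is non-degenerate, making all sums below finite. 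By the grading rule \eqref{modeaction1}, $u(m)$ sends $V_n$ to $V_{n+p-m-1}$, and from the explicit formula $u^\dagger(m)=(-1)^{p}\alpha^{m+1-p}u(2p-m-2)$ one checks that $u^\dagger(m)$ sends $V_{n+p-m-1}$ back to $V_n$. Hence both sides are elements of $V_{n+p-m-1}\otimes V_n$, and it suffices to show they agree there.

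Next I would fix $a\in V_{n+p-m-1}$ and $c\in V_n$ and apply the functional $\langle\,\cdot\,,a\rangle\otimes\langle c,\,\cdot\,\rangle$ to both sides. On the left this gives $\sum_{b\in V_n}\langle u(m)b,a\rangle\,\langle c,\bbar\rangle$; the completeness relation $c=\sum_{b}\langle c,\bbar\rangle\,b$, which follows directly from the duality $\langle b,\bbar\,\rangle$ is the Kronecker symbol (with no appeal to symmetry of the form), collapses the sum to $\langle u(m)c,a\rangle$. On the right the same functional gives $\sum_{b\in V_{n+p-m-1}}\langle b,a\rangle\,\langle c,u^\dagger(m)\bbar\rangle$, and the complementary completeness relation $a=\sum_{b}\langle b,a\rangle\,\bbar$ collapses this to $\langle c,u^\dagger(m)a\rangle$. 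The two resulting scalars coincide by the mode adjoint relation \eqref{eq:undag}, namely $\langle u(m)c,a\rangle=\langle c,u^\dagger(m)a\rangle$.

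To conclude, I would observe that as $a$ and $c$ range over $V_{n+p-m-1}$ and $V_n$, the functionals $\langle\,\cdot\,,a\rangle\otimes\langle c,\,\cdot\,\rangle$ separate points of $V_{n+p-m-1}\otimes V_n$ (by non-degeneracy on each graded piece), so matching all of them forces the two tensors to be equal, which is exactly \eqref{eq:adjointrel}. The only place requiring genuine care is the pairing step: one must keep the two completeness relations distinct and pair the first slot on the right and the second slot on the left, so that the duality convention alone — rather than symmetry of $\langle\,\cdot\,,\cdot\,\rangle$ — produces the adjoint relation, and one must confirm that the weight shifts line up so that the correct graded copairing appears on each side. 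Beyond this bookkeeping I do not expect any serious obstacle.
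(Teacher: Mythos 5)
Your proof is correct, and it is essentially the standard argument: note that the paper itself states Lemma \ref{AdjointLemma} without proof (it is recalled from \cite{TW}), where it follows from the direct dual-basis expansion $u(m)b=\sum_{c\in V_{n+p-m-1}}\langle u(m)b,\overline{c}\rangle\, c$ combined with invariance \eqref{eq:undag} and dual-basis completeness. Your test-functional phrasing (pairing both sides against $\langle\cdot,a\rangle\otimes\langle c,\cdot\rangle$ and separating points by non-degeneracy on each graded piece) is just the dual packaging of that same computation, resting on the identical three ingredients — graded completeness, the adjoint relation, and non-degeneracy — so there is nothing substantively different to flag.
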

\begin{remark}
	\label{rem:adjoint}
Suppose that $U$ is a vertex operator subalgebra of $V$ and  $
{W}\subset V$ is a $U$-module. For $u\in U$ and homogeneous $
{W}$-basis $\{ w\}$  we may then extend  \eqref{eq:adjointrel} to obtain  
	\begin{align*}
\sum_{w\in 
{W}_{n}} \left(u(m)w\right)\otimes \overline{w}=\sum_{w\in 
{W}_{n+p-m-1}} w\otimes \left(u^\dagger(m)\overline{w}\right).
\end{align*} 
\end{remark}
For the Schottky setup we have the following properties associated to the $\rho$-sewing. 
For each $a\in\Ip$, let $\{b_{a}\}$  denote a homogeneous  $V$-basis and let $\{\bbar _{a}\}$ be the  dual basis with 
$\langle \cdot, \cdot\rangle_{1}$,  i.e.,  with $\rho=1$.  
Define
\begin{align}
\label{eq:bbar}
b_{-a}=\rho_{a}^{\wt(b_{a})}\bbar _{a},\quad a\in\Ip,
\end{align}
for a formal $\rho_{a}$. We then 
identify $\rho_a$ with a Schottky sewing parameter. 
Then $\{b_{-a}\}$ is a dual basis for the bilinear form 
$\langle \cdot,\cdot\rangle_{\rho_{a}}$  with adjoint modes 
\begin{align}\label{RhoAdjoint}
u^{\dagger}_{\rho_{a}}(m)=(-1)^{p}\rho_{a}^{m-p+1}u(2p-2-m),
\end{align}
for $u$ quasiprimary of weight $p$.

\section{Appendix: Definition of a cluster algebra} 
\label{definition}
Let us first recall the notion of a cluster algebra \cite{FZ1,  FZ2, FZ3}   
following of\cite{Sch}.     
We consider commutative cluster algebras of rank $n$. 
The set of all cluster variables is constructed recursively from	
an initial set of $n$ cluster variables using mutations.
Every mutation defines a new cluster variable as a rational function of the cluster variables 
constructed previously. 
Thus, recursively, every cluster variable is a certain rational function in
the initial $n$ cluster variables.
These rational functions are Laurent polynomials \cite{FZ1}.
 A cluster algebra is determined by its initial 
seed which consists of a cluster 
\[
{\bf x} = 
(x_1, \ldots, x_n),  
\] 
of algebraically independent set of generators,  
 a coefficient tuple
\[
{\bf y} =
(y_1, \ldots, y_n), 
\]
 and a skew-symmetrizable $n \times n$ 
integer exchange matrix 
\[
B = \left(b_{ij}\right),
\]
 i.e., $b_{i,j} = -b_{j,i}$.
The coefficients $\left\{y_1, \ldots, y_n \right\}$ are taken in a torsion free 
abelian group ${\mathbb P}$. 
The mutation in direction $k$ defines a new cluster 
\begin{equation}
 x'_k x_k = y^+\prod\limits_{b_{k,i}>0} x_i^{b_{k,i}} + y^-\prod\limits_{b_{k,i}<0} x_i^{-b_{k,i}},  
\end{equation}
where $y^\pm$ are certain monomials in $(y_1, \ldots , y_n)$.
 Mutations also transform the coefficient tuple $y$ and the matrix $B$. 

If $\zeta$ is any cluster variable, then $u$ is obtained from the initial cluster 
$
\bm{x}
$
 by a sequence
of mutations, then \cite{FZ1} $\zeta$ can be written as a Laurent polynomial in variables
$(x_1, \ldots, x_n)$, that is,
\begin{equation}
\label{ration}
f(\bm{x} 
)= 
\zeta \; {\prod\limits_{i=1}^{n} x_i^{d_i}}, 
\end{equation}
for some $d_i$, 
where $f(\bm{x}
)$ is a polynomial with coefficients in the group 
ring ${{\mathbb Z}{\mathbb P}}$ 
of the coefficient group ${\mathbb P}$.
A cluster algebra is of finite type if it has only a finite number of seeds. 
In\cite{FZ2}
it was shown 
that cluster algebras of finite type can be classified in terms of the Dynkin diagrams of 
finite-dimensional simple Lie algebras.
\subsection{Formal definition} 
\label{cluster_algebra_defintion}
Let ${\mathbb P}$ be an abelian group with binary operation $\oplus$, 
 ${\mathbb Z}{\mathbb P}$ be the group ring of ${\mathbb P}$,  
and let ${\mathbb Q}{\mathbb P}(\bm{x} 
)$ be the field of rational functions
in $n$ variables with coefficients in ${\mathbb Q}{\mathbb P}$.
\begin{definition}
A seed is a triple $({\mathbf x}, {\mathbf y}, B)$, where
 ${\mathbf x}  = \left\{x_1, \ldots, x_n\right\}$ is a basis 
of ${\mathbb Q}{\mathbb P}\left(x_1, \ldots, x_n\right)$, 
 ${\mathbf y}  = \left\{y_1, \ldots, y_n\right\}$, 
is an $n$-tuple of elements $y_i \in {\mathbb P}$, and
$B$ is a skew-symmetrizable matrix. 
\end{definition}
\begin{definition}
Given a seed 
\[
\left({\mathbf x}, {\mathbf  y}, B\right), 
\]
 its mutation 
$\mu_k ({\mathbf x},{\mathbf  y}, B)$ in direction $k$ 
is a new seed
 $({\mathbf x}^\prime, {\mathbf  y}^\prime, B^\prime)$
defined as follows. Let $[x]_+ = max(x, 0)$. 
Then we have 
$B^\prime = (b^\prime_{ij} )$ with 
\begin{eqnarray}
b^\prime_{ij}= 
\left[
\begin{array}{l}
b_{ij} \; {\rm for} \; i = k \; {\rm or} \; j = k,
\\
b_{ij} + [-b_{ik}]_+ b_{kj} + b_{ik}[b_{kj} ]_+, \;\;  {\rm otherwise}.  
\end{array}
\right.
\end{eqnarray}
For new coefficients 
${\mathbf y}^\prime = \left( y^\prime_1, \ldots, y^\prime_n \right)$, 
with
 \begin{eqnarray}
y^\prime_j=  
\left[
\begin{array}{l}
y^{-1}_k \;  {\rm if} \;  j = k,
\\
y_j y_k^{[b_{kj}]_+} (y_k \oplus 1)^{-b_{kj}}\;  {\rm if}  \; j \ne k, 
\end{array}
\right.
\end{eqnarray} 
and 
${\bf x} = 
(x_1, \ldots, x_n)$, 
 where
\begin{equation}
(y_k \oplus 1) \; x_k \; x^\prime_k = 
y_k \prod\limits_{i=1}^n x_i^{[b_{ik}]_+} 
 +
\prod\limits_{i=1}^n
x_i^{[-b_{ik}]_+}. 
\end{equation} 
\end{definition}
Mutations are involutions, i.e., 
\[
\mu_k \; \mu_k \; ({\bf x}, {\bf  y}, B)=({\bf x}, {\bf  y}, B).
\]
\section{Appendix: The Schottky uniformization of Riemann surfaces}
\label{sec:Genusg} 
In this appendix we recall the  Schottky uniformization of Riemann surfaces \cite{TW}. 
Consider a compact marked  Riemann surface $\Sg$ of genus $g$, e.g., \cite{FK, Mu, Fa, Bo},  
with canonical homology basis 
$\alpha_{a}$, $\beta_{a}$ for $a\in\Ip=\{ 1,2, \ldots,g \}$.
We recall  
the  construction of a genus $g$ Riemann surface $\Sg$ using the Schottky uniformization where we sew $g$ handles to the Riemann sphere 
\[
\Szero\cong\Chat=\C\bigcup \{\infty\}, 
\]
 e.g., \cite{Fo, Bo}. 
Every Riemann surface can be non-uniquely Schottky uniformized \cite{Be2}.
For  $a\in\I=\{\pm 1,\pm 2,\ldots,\pm g\}$, 
  let $\calC_{a}\subset \Szero$ be $2g$ non-intersecting Jordan curves. 
For 
$z \in \calC_{a}$, $z'\in \calC_{-a}$, $W_{\pm a}\in\Chat$,  
$a\in\Ip$, and 
$q_{a}$ with 
\[
0<|q_{a}|<1, 
\]
let curves be identified by the sewing relation 
\begin{align}\label{eq:SchottkySewing}
\frac{z'-W_{-a}}{z'-W_{a}}\cdot\frac{z-W_{a}}{z-W_{-a}}=q_{a}. 
%
\end{align}
For $a\in\Ip$,   Introduce
\begin{align}
\sigma_{a}=(W_{-a}-W_{a})^{-1/2}\begin{pmatrix}
1 & -W_{-a}\\
1 & -W_{a}
\end{pmatrix}, 
\end{align}
and 
\begin{align}
\label{eq:gammaa}
\gamma_{a}=\sigma_{a}^{-1}
\begin{pmatrix}
q_{a}^{1/2} &0\\
0 &q_{a}^{-1/2}
\end{pmatrix}
\sigma_{a},
\end{align}
Thus  
\[
z'=\gamma_{a}z. 
\]
Note that 
\[
\sigma_{a}(W_{-a})=0, 
\]
 and 
\[
\sigma_{a}(W_{a})=\infty, 
\]
 are, respectively, attractive and repelling fixed points of the map
\[
Z \rightarrow Z'=q_{a} Z, 
\]
 for  
\[
Z=\sigma_{a} z, 
\]
 and 
\[
Z'=\sigma_{a} z'.
\]
Here $W_{-a}$ and  $W_{a}$  are the corresponding fixed points for $\gamma_{a}$. 
One identifies the standard homology cycles 
$\alpha_{a}$  with $\calC_{-a}$ and  $\beta_{a}$ with a path connecting  $z\in  \calC_{a}$ to 
\[
z'=\gamma_{a}z, \in  \calC_{-a}.
\]
and $z'\in  \calC_{-a}$.
\begin{definition}
The genus $g$ Schottky group $\Gamma$  is   the free group with generators $\gamma_{a}$.
Define 
\[
\gamma_{-a}=\gamma_{a}^{-1}.
\]
 The independent elements of $\Gamma$ are reduced words of length $k$
 of the form
\[
\gamma=\gamma_{a_{1}}\ldots \gamma_{a_{k}}, 
\]
 where $a_{i}\neq -a_{i+1}$ for each $i=1,\ldots ,k-1$. 
\end{definition}
 Let $\Lambda(\Gamma)$ denote the  limit set  of $\Gamma$, i.e.,
 the set of limit points of the action of $\Gamma$ on $\Chat$. 
Then 
\[
\Sg \simeq \Omo/\Gamma 
\]
where 
\[
\Omo=\Chat-\Lambda(\Gamma).
\]
  We let $\D\subset\Chat$  denote the standard connected fundamental 
region with oriented boundary curves $\calC_{a}$.  
Define  
\[
w_{a}=\gamma_{-a}.\infty.
\]
 Using \eqref{eq:SchottkySewing} we find 
\begin{align}
\label{eq:wa}
w_{a}=\frac{W_{a}-q_{a}W_{-a}}{1-q_{a}}, 
\end{align}
for $a\in\I$. 
where we define $q_{-a}=q_{a}$.
Then \eqref{eq:SchottkySewing} is equivalent to
\begin{align}\label{eq:SchottkySewing2}
(z'-w_{-a})(z-w_{a})=\rho_{a},
\end{align}
with 
\begin{align}
\label{eq:rhoa}
\rho_{\pm a}=-\frac{q_{a}(W_{a}-W_{-a})^{2}}{(1-q_{a})^{2}}.
\end{align}
\eqref{eq:SchottkySewing2} implies
\begin{align}
\label{eq:gamma_a.z}
\gamma_{a}z=w_{-a}+\frac{\rho_{a}}{z-w_{a}}.
\end{align}
 Let $\Delta_{a}$ be the disc with centre $w_{a}$ and radius $|\rho_{a}|^{\frac{1}{2}}$. 
One 
chooses the Jordan curve $\calC_{a}$ to be the  boundary of $\Delta_{a}$. Then 
$\gamma_{a}$ maps the exterior (interior) of $\Delta_{a}$  to the interior (exterior) of $\Delta_{-a}$ since
 \begin{align*}
 |\gamma_{a}z-w_{-a}||z-w_{a}|=|\rho_{a}|.
 \end{align*}
The discs $\Delta_{a}$, $\Delta_{b}$ are non-intersecting if and only if
 \begin{align}
 \label{eq:JordanIneq}
 |w_{a}-w_{b}|>|\rho_{a}|^{\frac{1}{2}}+|\rho_{b}|^{\frac{1}{2}},
%
 \end{align} 
for all $a\neq b$. 
One defines $\mathfrak{C}_{g}$ to be the set 
\[
 \{ (w_{a},w_{-a},\rho_{a})|a\in\Ip\}\subset \C^{3g}, 
\]
 satisfying \eqref{eq:JordanIneq}. We refer to $\mathfrak{C}_{g}$ as the Schottky parameter space.
  
The relation \eqref{eq:SchottkySewing} is M\"obius  invariant for 
\[
\gamma =\left(
\begin{array}{cc}
A&B\\C&D
\end{array}
\right)\in\SL_{2}(\C), 
\]
 with 
\[
(z,z',W_{a},q_{a})\rightarrow(\gamma z,\gamma z',\gamma W_{a},q_{a}), 
\]
 giving
an $\SL_{2}(\C)$ action on $\mathfrak{C}_{g}$ as follows
\begin{align}
\label{eq:Mobwrhoa}
&\gamma:(w_{a},\rho_{a})\mapsto 
\nn
& \qquad  \left(	\frac { \left( Aw_{a}+B \right)  \left( Cw_{-a}+D \right) -\rho_{a}
	\,AC}{ \left( Cw_{a}+D \right)  \left( Cw_{-a}+D \right) -\rho_{a}\,{
		C}^{2}},
{\frac {\rho_{a}}{ \left(  \left( Cw_{a}+D \right)  \left( Cw_{-a}+D
		\right) -\rho_{a}\,{C}^{2} \right) ^{2}}}\right).
\end{align}
\begin{definition}
One defines the  Schottky space as
\[
\mathfrak{S}_{g}=\mathfrak{C}_{g}/\SL_{2}(\C), 
\]
\end{definition}
This 
 provides a natural covering space for the moduli space of genus $g$ 
Riemann surfaces (of dimension 1 for $g=1$ and $3g-3$ for $g\ge 2$).  

\section{Appendix: coefficient functions in the Zhu reduction formula}
\label{derivation}
For purposes of the formula \eqref{eq:ZhuGenusg} 
%
 we recall here certain definitions \cite{TW}. 
Define a column vector 
\[
X=(X_{a}(m)), 
\]
 indexed by $ m\ge 0$ and $ a\in\I$  with components 
\begin{align}\label{XamDef}
X_{a}(m)=\rho_{a}^{-\frac{m}{2}}\sum_{\bm{b}_{+}}Z^{(0)}(\ldots;u(m)b_{a}, w_{a};\ldots),
\end{align}
and a row vector 
\[
p(x)=(p_{a}(x,m)), 
\]
  for $m\ge 0, a\in\I$  with components 
\begin{align}\label{eq:pdef}
p_{a}(x,m)=\rho_{a}^{\frac{m}{2}}\del^{(0,m)}\psi_{p}^{(0)}(x,w_{a}).
\end{align}
Introduce the  
column vector 
\[
G=(G_{a}(m)), 
\]
 for $m\ge 0, a\in\I$, given by 
\begin{align*}
G=\sum_{k=1}^{n}\sum_{j\ge 0}\del_{k}^{(j)} \; q(y_{k})\; Z_{V}^{(g)}(v_1, y_1; \ldots; u(j)v_{k},y_{k};
\ldots; v_n, y_n),
\end{align*}
where $q(y)=(q_{a}(y;m))$,  for $m\ge 0$, $a\in\I$, is a column vector with components 
\begin{align}
\label{eq:qdef}
q_{a}(y;m)=(-1)^{p}\rho_{a}^{\frac{m+1}{2}}\del^{(m,0)}\psi_{p}^{(0)}(w_{-a},y),
\end{align}
and 
\[
R=(R_{ab}(m,n)), 
\]
 for $m$, $n\ge 0$ and $a$, $b\in\I$ is a doubly indexed matrix with components 
\begin{align}
R_{ab}(m,n)=\begin{cases}(-1)^{p}\rho_{a}^{\frac{m+1}{2}}\rho_{b}^{\frac{n}{2}}\del^{(m,n)}\psi_{p}^{(0)}(w_{-a},w_{b}),&a\neq-b,\\ 
(-1)^{p}\rho_{a}^{\frac{m+n+1}{2}}\E_{m}^{n}(w_{-a}),&a=-b, 
\end{cases}
\label{eq:Rdef}
\end{align}
where
\begin{align}
\label{eq:Ejt}
\E_{m}^{n}(y)=\sum_{\ell=0}^{2p-2}\del^{(m)}f_{\ell}(y)\;\del^{(n)}y^{\ell}, 
\end{align}
\begin{align}\label{PsiDef}
\psi_{p}^{(0)}(x,y)=\frac{1}{x-y}+\sum_{\ell=0}^{2p-2}f_{\ell}(x)y^{\ell},
\end{align}
for {any} Laurent series $f_{\ell}(x)$ for $\ell=0,\ldots ,2p-2$.
Define the doubly indexed matrix $\Delta=(\Delta_{ab}(m,n))$ by
\begin{align}
\Delta_{ab}(m,n)=\delta_{m,n+2p-1}\delta_{ab}. 
\label {eq:Deltadef}
\end{align}
Denote by 
\[
\widetilde{R}=R\Delta,
\]
and the formal inverse $(I-\widetilde{R})^{-1}$ is given by 
\begin{align}
\label{eq:ImRinverse}
\left(I-\widetilde{R}\right)^{-1}=\sum_{k\ge 0}\widetilde{R}^{\,k}.
\end{align}
Define 
$\chi(x)=(\chi_{a}(x;\ell))$ and 
\[
o(u;\bm{v,y})=(o_{a}(u;\bm{v,y};\ell)), 
\]
 are 
{finite} row and column vectors indexed by 
$a\in\I$, $0\le \ell\le 2p-2$ with
\begin{align}
\label{eq:chiadef}
\chi_{a}(x;\ell)&=\rho_{a}^{-\frac{\ell}{2}}(p(x)+\widetilde{p}(x)(I-\widetilde{R})^{-1}R)_{a}(\ell),
\\
\label{LittleODef}
o_{a}(\ell)&=o_{a}(u;\bm{v,y};\ell)=\rho_{a}^{\frac{\ell}{2}}X_{a}(\ell),
\end{align}
and where 
\[
\widetilde{p}(x)=p(x)\Delta. 
\]
  $\psi_{p}(x,y)$ is defined by
\begin{align}
\label{eq:psilittleN}
\psi_{p}(x,y)=\psi_{p}^{(0)}(x,y)+\widetilde{p}(x)(I-\widetilde{R})^{-1}q(y).
\end{align}
For each $a \in\Ip$ we define a vector
\[
\theta_{a}(x)=(\theta_{a}(x;\ell) ), 
\]
 indexed by 
$0\le \ell\le  2p-2$ with components
\begin{align}\label{eq:thetadef}
\theta_{a}(x;\ell) = \chi_{a}(x;\ell)+(-1)^{p }\rho_{a}^{p-1-\ell}\chi_{-a}(x;2p-2-\ell).
\end{align}
Now define the following vectors of formal differential forms
\begin{align}
\label{eq:ThetaPQdef}
 P(x) =p(x) \; dx^{p},
\nn
 Q(y)=q(y)\; dy^{1-p},
\end{align}
with 
\[
\widetilde{P}(x)=P(x)\Delta.
\]
 Then with 
\begin{equation}
\label{psih}
\Psi_{p} (x,y) =\psi_{p}(x,y) \;dx^{p}\; dy^{1-p}, 
\end{equation}
 we have
\begin{align}\label{GenusgPsiDef}
\Psi_{p}(x,y)=\Psi_{p}^{(0)}(x,y)+\widetilde{P}(x)(I-\widetilde{R})^{-1}Q(y).
\end{align}
Defining   
\begin{equation}
\label{thetanew}
\Theta_{a}(x;\ell) =\theta_{a}(x;\ell)\; dx^{p}, 
\end{equation}
  and 
\begin{equation}
\label{oat}
O_{a}(u; \bm{v,y};\ell) = o_{a}(u; \bm{v,y};\ell) \; \bm{dy^{\wt(v)}}, 
\end{equation}
%
\begin{remark}
	\label{rem:MainTheorem}
The $\Theta_{a}(x)$, and $\Psi_{p}(x,y )$ coefficients depend on $p=\wt(u)$ but  are otherwise independent of the vertex operator algebra $V$. 
Note that for a 1-point function, \eqref{eq:ZhuGenusg} implies 
\begin{align}
\label{eq:1ptfun}
\F_{V}^{(g)}(u,x)&=\sum_{a=1}^{g} \Theta_{a}(x)\; O_{a}(u).
\end{align}
\end{remark}


\end{document}